\documentclass{iopconfser}
\usepackage{amsmath, amssymb, amsthm, amsfonts, graphicx}

\theoremstyle{definition}
\newtheorem{definition}{Definition}[section]
\newtheorem{theorem}{Theorem}[section]
\newtheorem{corollary}{Corollary}[theorem]

\newtheorem{proposition}{Proposition}[section]

\newtheorem{remark}{Remark}[section]

\begin{document}

\title{Way-below relation and tensor products}

\author{Cristian Ivanescu$^{1}$ and Hunter Labrecque$^{2}$}

\affil{$^1$Department of Mathematics, MacEwan University, Edmonton, Canada}
\affil{$^2$ Department of Mathematics, MacEwan University, Edmonton, Canada} 

\email{ivanescuc@macewan.ca}

\begin{abstract}
We establish that the way-below relation is preserved under the tensor product. After completing this work, we became aware that, in the framework of the Cuntz semigroup, this result has already been observed in the literature. Nevertheless, we include our argument here, as we believe it offers a complementary perspective and may assist the reader in better understanding the behaviour of the way-below relation in this setting.

\end{abstract}

\section{Introduction} 

 The way-below relation, initially defined for open set inclusion, see \cite{Scott}, captures a more nuanced relationship than simple containment.  The statement \textit{open set U is contained in open set V} is often too broad.  For example, $V$ containing $U$ plus a single additional point is fundamentally different from $U$ being a subset of $V$ where the closure of $U$ is entirely within $V$.  In locally compact spaces, the relevant setting for our work, we refine this notion.  We require that the closure of $U$ be compact and contained within $V$. This condition, known as \textit{compact containment} or \textit{the way-below relation}, provides crucial additional information. For visualizations of compact containment of sets, please refer to Appendix B, Figure 2. Figure 1 provides one-dimensional and two-dimensional examples of subsets that are not compactly contained. Importantly, this topological concept has an analogue in the category of C*-algebras, making it a valuable tool in both contexts.\\
 
This paper is organized as follows: Section 2 introduces the noncommutative definition of the way-below relation. In the context of the Cuntz semigroup, we utilize countably generated Hilbert C$^*$-modules. This assumption is crucial because it allows us to apply the Kasparov stabilization theorem. Therefore, the more precise terminology should technically be \textit{countably generated compact containment}. This definition still makes sense in the setting of any Hilbert C*-modules, not just countable generated Hilbert C*-modules. We will henceforth assume all Hilbert C*-modules are countably generated as we are interested in the setting of the Cuntz semigroup.\\


 Section 3 lays the groundwork for the main result by examining the way-below relation for ideals within commutative C$^*$-algebras. Leveraging the correspondence between ideals and open sets in this context, we demonstrate the preservation of the way-below relation under tensor products. 
Section 4 presents the paper's main results. We offer two distinct proofs: the first relies on the definition of way-below relation using Hilbert C*-modules as outlined in Theorem 4.1, while the second employs a characterization of the way-below relation provided by Gardella and Perera (Theorem 4.2).

\section{Non-commutative definition of compact containment}

In \cite{CEI}, a specific relation is established within the context of countably generated Hilbert C$^*$-modules. On page 168 of this paper, it is demonstrated that this concrete relation is equivalent to the abstract, order-theoretic concept of compact containment of the corresponding Cuntz equivalence classes. We now recall the definition of compact containment (also known as the way-below relation) for Hilbert C$^*$-modules, which serves as a non-commutative analogue of the way-below relation for open sets in topology. 
See \cite{Lan} or \cite{Man} for details about Hilbert C$^*$-modules.

\begin{definition}
 Let $X$ and $Y$ be two countably generated Hilbert C$^*$-modules. We say that $X$ is countably compact contained in $Y$,  $ X \subset \subset Y$, if there is a compact self-adjoint endomorphism $b$ of $Y$ which acts as the identity on $X$.  
\end{definition}
We also recall the purely order-theoretic definition, which is applicable to any ordered set.
\begin{definition}
 We say that $x << y$ in a given ordered set if whenever $(y_n)$ is such that $\sup y_n \geq y$ then there is some $i_0$ such that $x \leq y_{i_0}$.   
\end{definition}

Since closed two sided ideals can naturally be viewed as Hilbert C$^*$-modules, we make the following definition:
\begin{definition} Let $I,J$ be two closed two sided ideals of a C$^*$-algebra $A$. We say that ideal $I$ is compactly contained in ideal $J$, written $I \subset \subset J$, if the Hilbert C$^*$-module $I$ is compactly contained in the Hilbert C$^*$-module $J$; see Definition 2.1 above.     
\end{definition}

\begin{remark}
In \cite{Gab}, J. Gabe's Definition 10 introduces a concept of compact containment for ideals in complete lattices. We hypothesize that Gabe's definition is equivalent to our proposed definition of compact containment; however, a formal proof of this equivalence remains elusive. We aim to explore this relationship further in subsequent research. 
\end{remark}

It is shown in \cite{Scott}, Proposition I-1.4, together with the Remark right after Definition I-1.1, that in the context of open subsets of a topological space $K$, the order-theoretic definition of the way-below relation $ U << V$ is equivalent to the existence of a compact set $Q$ satisfying $U \subseteq Q \subseteq V$. Moreover, when $K$ is Hausdorff, this is equivalent to the closure of $U$, $\overline{U}$, being compact and contained within $V$.\\

We define the way-below relation for open sets for the reader's reference.
For open sets $U,V$ of a locally compact set $K$: 
\begin{definition}
 $U << V$ if $\overline{U} \subseteq V$ and  $\overline{U}$ is compact.
\end{definition}

\section{Way-below relation and tensor products: the commutative case}

This section explores the concept of compact containment in commutative C$^*$-algebras and demonstrates its preservation under tensor products.\\

The structure theorem of Gelfand and Naimark establishes a fundamental duality between commutative C*-algebras and topological spaces. Specifically, a commutative unital C$^*$-algebra is isomorphic to the algebra of continuous functions, $C(K)$, on a compact Hausdorff space $K$. Similarly, a non-unital commutative C$^*$-algebra is isomorphic to $C_0(L)$, the algebra of continuous functions vanishing at infinity on a locally compact Hausdorff space $L$. The process of minimal unitization for non-unital algebras $C_0(L)$ corresponds to the one-point compactification of the locally compact space $L$.\\

Next we extend the established duality to encompass compact containment, proving its validity for compact containment of ideals in $C(K)$ and their associated open subsets.\\

It is worth noting that this duality motivates the interpretation of non-commutative C*-algebras as analogous to $C(N)$, where $N$ represents a non-commutative topological space. However, it's crucial to acknowledge that within the study of non-commutative C*-algebras, the concept of such a \textit{non-commutative space} $N$ lacks a universally accepted, concrete definition.\\

\begin{remark}
There exists a correspondence between open subsets $U \subseteq K$ and closed two-sided ideals in $C(K)$, where $K$ is compact. Specifically, open sets $U$ define ideals of functions vanishing outside $U$, and conversely, every closed two-sided ideal $I$ is expressible as $C_0(U)$ for some open $U \subseteq K$. (See Appendix A, Proposition, for a complete account.)
\end{remark}
\begin{proposition}
Let $I_1$ and $I_2$ be closed ideals in $C(K)$, compactly contained in $J_1$ and $J_2$, respectively. Then $I_1 \otimes I_2$ is compactly contained in $J_1 \otimes J_2$. 
\end{proposition}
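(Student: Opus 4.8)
The plan is to transport the entire statement across the Gelfand duality to the category of locally compact Hausdorff spaces, where compact containment becomes the concrete topological notion of Definition 2.4, and then to dispatch the product case by an elementary argument about closures and compactness. First I would use the correspondence of Remark 3.1 to write each ideal as functions vanishing off an open set: $I_j = C_0(U_j)$ and $J_j = C_0(V_j)$ for open sets $U_j, V_j \subseteq K$, $j=1,2$. Invoking the extension of this duality to compact containment (so that $I_j \subset \subset J_j$ for ideals corresponds to $U_j << V_j$ for the associated open sets), together with the Scott characterization recalled above and the Hausdorffness of $K$, the hypotheses become simply: $\overline{U_j}$ is compact and $\overline{U_j} \subseteq V_j$ for $j=1,2$.

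Next I would identify the tensor products with function algebras on a product space. Since $C(K)$ is commutative, hence nuclear, the C*-tensor product is unique and $C(K) \otimes C(K) \cong C(K \times K)$. Under this isomorphism $C_0(U_1) \otimes C_0(U_2)$ is carried to $C_0(U_1 \times U_2)$, the ideal of functions vanishing off the open set $U_1 \times U_2 \subseteq K \times K$, and likewise $J_1 \otimes J_2 \cong C_0(V_1 \times V_2)$. Thus, translating back through the duality, the claim $I_1 \otimes I_2 \subset \subset J_1 \otimes J_2$ becomes exactly the assertion $U_1 \times U_2 << V_1 \times V_2$ in the lattice of open subsets of $K \times K$.

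Finally I would verify this topological way-below directly. Using the standard identity $\overline{U_1 \times U_2} = \overline{U_1} \times \overline{U_2}$ in the product topology, the set $\overline{U_1 \times U_2}$ is a product of two compact sets, hence compact, and it satisfies $\overline{U_1} \times \overline{U_2} \subseteq V_1 \times V_2$ because $\overline{U_j} \subseteq V_j$. By Definition 2.4 this yields $U_1 \times U_2 << V_1 \times V_2$, which completes the argument after passing back to ideals.

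The main obstacle I anticipate is not the topological computation, which is routine, but rather the two bridging steps. The first is confirming that the operator-algebraic compact containment of Definition 2.3, phrased via Hilbert C*-modules, genuinely corresponds to the topological way-below of Definition 2.4 under Gelfand duality; the second is checking that the tensor product $C_0(U_1) \otimes C_0(U_2)$ sits inside $C(K \times K)$ \emph{precisely} as the ideal $C_0(U_1 \times U_2)$ attached to the product open set (rather than merely being contained in it), for which nuclearity of $C(K)$ is what guarantees there is no ambiguity in the tensor norm. Once these identifications are secured, preservation of compact containment under tensor products follows immediately from the behaviour of closures and compactness of finite products.
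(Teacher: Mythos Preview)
Your proposal is correct and follows essentially the same route as the paper: translate ideals to open sets via $I_j=C_0(U_j)$, $J_j=C_0(V_j)$, identify the tensor products with $C_0(U_1\times U_2)$ and $C_0(V_1\times V_2)$, and then verify $U_1\times U_2\ll V_1\times V_2$ using $\overline{U_1\times U_2}=\overline{U_1}\times\overline{U_2}$ and compactness of finite products. The two bridging steps you flag as potential obstacles are exactly the points the paper addresses (the first via its Remark~3.2, the second by the commutative identification $C_0(U_1)\otimes C_0(U_2)\cong C_0(U_1\times U_2)$), so your caution is well placed but no new idea is needed.
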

\begin{proof}

This can be approached via compact containment of open sets. First, we present a useful remark:\\
\begin{remark}
  Given ideals $I,J \subset C(K)$ with $I \subset \subset J$, we represent them as $I=C_0(U)$ and $J=C_0(V)$ for open $U,V \subseteq K$. Applying Definition 2.1, we find that the identity on $C_0(U)$ is a compact endomorphism. Consequently, the closure of $U$ must be a compact subset of $V$, leading to the conclusion that $U$ is compactly contained in $V$.  
\end{remark}

 Hence the compact containment of closed ideals is equivalent to the compact containment of their corresponding open sets.  That is, $I_1 \subset \subset J_1 $
  if and only if $U_1 << V_1$
 , where $U_1, V_1$ denote the corresponding open sets of ideals $I_1, J_1$. Note that $$I_1 \otimes I_2 \cong C_0(U_1) \otimes C_0(U_2) \cong C_0(U_1 \times U_2)$$
 and similarly 
 $$J_1 \otimes J_2 \cong C_0(V_1) \otimes C_0(V_2) \cong C_0(V_1 \times V_2)$$
 Therefore, showing $I_1 \otimes I_2 \subset \subset J_1 \otimes J_2$ 
 is equivalent to showing that $U_1 \times U_2 << V_1 \times V_2$.\\ 

 A straightforward exercise in topology demonstrates the following: If $U_1$ is compactly contained in $V_1$, and $U_2$ is compactly contained in $V_2$, then their Cartesian product, $U_1 \times U_2$, is compactly contained in $V_1 \times V_2$. This follows directly from the definition of compact containment.  Indeed, our assumptions imply that the closures of $U_1$ and $U_2$ are compact sets contained within $V_1$ and $V_2$, respectively.  Since the Cartesian product of compact sets is itself compact, and because the closure of $U_1 \times U_2$ is equal to the Cartesian product of the closures of $U_1$ and $U_2$, i.e.,  
 $$\overline{U_1 \times U_2}=\overline{U_1} \times \overline{U_2}$$ we conclude that $U_1 \times U_2$ is compactly contained in $V_1 \times V_2$ 
\end{proof}
 
This result extends readily to Cartesian products of finitely many pairs of compactly contained sets. Furthermore, by leveraging Tychonoff's theorem, we can generalize it to countably many such pairs.  In the context of ideals, this infinite product scenario corresponds to the inductive limit of ideals.

 \section{Main Result}
In this section, we present our main result. Throughout this section we assume that all C$^*$-algebras have stable rank one, all Hilbert C$^*$-modules are countably generated and all hereditary C$^*$-subalgebras are $\sigma$-unital. Assuming the C$^*$-algebra has stable rank one makes it possible (see \cite{CEI}) to view the Cuntz relation as isomorphisms between associated Hilbert C$^*$-modules. The assumption that hereditary C$^*$-subalgebras are $\sigma$-unital is a standard assumption in many contexts. This is equivalent to the algebra containing a strictly positive element. A key class of C$^*$-algebras with this property includes separable C$^*$-algebras. For a detailed understanding of compact self-adjoint endomorphisms, we refer the reader to \cite{Man} or \cite{Lan}. Notably, the proof we provide closely parallels that of Theorem 1 in \cite{CEI}, with the key distinction being the replacement of direct sums with tensor products. We are grateful to George Elliott for bringing this observation to our attention.\\

Convention on Tensor Products: If the C$^*$-algebras in question are not nuclear, the choice of tensor product becomes significant. However, the arguments presented here are robust; they hold irrespective of the specific tensor product chosen, whether it be the spatial tensor product, the maximal tensor product, or any other intermediate tensor product.\\

Unless stated otherwise, the term \textit{tensor product} in this paper refers to the minimal, or spatial, tensor product. For nuclear C$^*$-algebras, the spatial tensor product is unique and coincides with all other possible C$^*$-tensor products, including the maximal tensor product. All commutative C$^*$-algebras are nuclear.

\begin{theorem} 
The compact containment property is preserved under tensor products for countably generated Hilbert C$^*$-modules. Specifically, if 
$$Y_1 \subset \subset X_1\;\mathrm{and}\; Y_2 \subset \subset X_2\;\mathrm{then} $$
    $$Y_1 \otimes Y_2 \subset \subset X_1 \otimes X_2.$$
\end{theorem}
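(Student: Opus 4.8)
The plan is to produce the required witness directly by tensoring the two given witnesses. Applying Definition 2.1 to the hypotheses $Y_1 \subset\subset X_1$ and $Y_2 \subset\subset X_2$, I would fix compact self-adjoint endomorphisms $b_1 \in \mathcal{K}(X_1)$ and $b_2 \in \mathcal{K}(X_2)$ with $b_i$ acting as the identity on $Y_i$, $i=1,2$. Working with the external tensor product (over the spatial tensor product of the underlying coefficient algebras), the candidate witness for the conclusion is $b := b_1 \otimes b_2$, regarded as an endomorphism of $X_1 \otimes X_2$. Before verifying its properties I would record the standard fact that the inner products on $Y_1 \otimes Y_2$ and on $X_1 \otimes X_2$ agree on elementary tensors, so that $Y_1 \otimes Y_2$ embeds isometrically as a closed submodule of $X_1 \otimes X_2$; this is what makes the phrase \emph{acts as the identity on $Y_1 \otimes Y_2$} meaningful.

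There are then three things to check. Self-adjointness is immediate from the identity $(S \otimes T)^* = S^* \otimes T^*$ for adjointable operators on an external tensor product, giving $b^* = b_1^* \otimes b_2^* = b_1 \otimes b_2 = b$. That $b$ acts as the identity on $Y_1 \otimes Y_2$ is checked on elementary tensors: for $y_i \in Y_i$ one has $b(y_1 \otimes y_2) = b_1 y_1 \otimes b_2 y_2 = y_1 \otimes y_2$, and since the linear span of such elementary tensors is dense in $Y_1 \otimes Y_2$ and $b$ is bounded, $b$ restricts to the identity on all of $Y_1 \otimes Y_2$. This mirrors the role played by direct sums in Theorem 1 of \cite{CEI}, with tensor products in place of direct sums.

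The step I expect to be the main obstacle is compactness of $b = b_1 \otimes b_2$. The clean route is to invoke the isomorphism $\mathcal{K}(X_1 \otimes X_2) \cong \mathcal{K}(X_1) \otimes \mathcal{K}(X_2)$ for external tensor products (see \cite{Lan}), under which $b_1 \otimes b_2 \in \mathcal{K}(X_1) \otimes \mathcal{K}(X_2)$ is manifestly compact. A more hands-on alternative, which also clarifies why the argument is insensitive to the choice of C$^*$-tensor product, is to work with the rank-one (elementary) operators $\theta_{\xi,\eta}(\zeta) = \xi\langle \eta, \zeta\rangle$ that densely span $\mathcal{K}(X_i)$: the key identity $\theta_{\xi_1,\eta_1} \otimes \theta_{\xi_2,\eta_2} = \theta_{\xi_1 \otimes \xi_2,\, \eta_1 \otimes \eta_2}$ shows that a tensor of finite-rank operators is again finite-rank on $X_1 \otimes X_2$. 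Approximating $b_1$ and $b_2$ in norm by finite-rank operators and using that $(S,T) \mapsto S \otimes T$ is jointly continuous for the operator norm, $b_1 \otimes b_2$ is a norm-limit of finite-rank operators, hence compact. The care required here is to state the external tensor product and the behaviour of $\mathcal{K}(\cdot)$ precisely, and to confirm the displayed identity together with the continuity estimate that lets compactness pass through the closure; with those in hand the three properties assemble to give $b$ as the desired witness for $Y_1 \otimes Y_2 \subset\subset X_1 \otimes X_2$.
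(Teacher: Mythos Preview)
Your proposal is correct and follows exactly the same approach as the paper: take the tensor product $b_1 \otimes b_2$ of the two witnessing compact self-adjoint endomorphisms and observe that it is again a compact self-adjoint endomorphism acting as the identity on $Y_1 \otimes Y_2$. In fact you supply considerably more detail than the paper does---the paper simply asserts the three properties of $b_1 \otimes b_2$ without verification, whereas you check self-adjointness, the identity action on elementary tensors plus density, and compactness via the isomorphism $\mathcal{K}(X_1 \otimes X_2) \cong \mathcal{K}(X_1) \otimes \mathcal{K}(X_2)$ (or the rank-one approximation argument).
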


\begin{proof} We begin with some essential definitions and properties.
Given a Hilbert $A$-module $X$ and a Hilbert $B$-module $Y$, their external tensor product $X\otimes Y$ is defined as the completion of the algebraic tensor product $X \odot Y $ with respect to the norm induced by the $A\otimes B$-valued inner product
$$<x_1 \otimes y_1, x_2 \otimes y_2>=<x_1,x_2><y_1,y_2> $$ resulting in a Hilbert $A \otimes B$-module.\\

 Let $b_1$ and $b_2$ be compact self-adjoint endomorphisms on modules $X_1$ and $X_2$, respectively. Assume that $b_1$ acts as identity on  $Y_1 \subseteq X_1$, and $b_2$ acts as identity on $Y_2 \subseteq X_2$. Then the tensor product of $b_1 \otimes b_2$ is a compact self-adjoint endomorphism on the $X_1 \otimes X_2$, and acts as an identity on the tensor product $Y_1 \otimes Y_2$. 
\end{proof}

\begin{definition}
   Let $A$ be a C$^*$-algebra and $M$ be a Hilbert C$^*$-module over $A$. We say that a Hilbert C$^*$-module $M$ is a compact element if $M  \subset \subset  M$.
\end{definition}
 \begin{corollary}
 If $M_1$ and $M_2$ compact (i.e., $M_1 \subset \subset M_1$, $M_2  \subset \subset M_2$ then $M_1 \otimes M_2$ is compact (i.e., $M_1 \otimes M_2  \subset \subset M_1 \otimes M_2$.)   
 \end{corollary}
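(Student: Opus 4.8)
The plan is to obtain this as an immediate specialization of Theorem 4.1. By Definition 4.2, the assertion that a Hilbert $C^*$-module $M$ is compact means precisely $M \subset \subset M$; thus the hypotheses $M_1 \subset \subset M_1$ and $M_2 \subset \subset M_2$ are exactly the input required by the preservation theorem, and nothing beyond it is needed.

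Concretely, I would invoke Theorem 4.1 with the substitution $Y_1 = X_1 = M_1$ and $Y_2 = X_2 = M_2$. Since $M_1 \subset \subset M_1$ and $M_2 \subset \subset M_2$ hold by hypothesis, the theorem yields $M_1 \otimes M_2 \subset \subset M_1 \otimes M_2$, which is precisely the statement that $M_1 \otimes M_2$ is compact in the sense of Definition 4.2.

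No genuine obstacle arises here; the result is a direct corollary. The only point worth verifying is that the witnessing endomorphism constructed in the proof of Theorem 4.1 behaves correctly under this identification. Tracing through that argument, the compact self-adjoint endomorphism $b_i$ of $X_i = M_i$ that acts as the identity on $Y_i = M_i$ is simply a compact self-adjoint endomorphism of $M_i$ acting as the identity on all of $M_i$; the tensor product $b_1 \otimes b_2$ is then a compact self-adjoint endomorphism of $M_1 \otimes M_2$ acting as the identity on $M_1 \otimes M_2$, exactly as the definition of a compact element demands. Hence the conclusion follows with no additional work.
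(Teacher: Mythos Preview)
Your proposal is correct and matches the paper's approach: the paper states the corollary immediately after Theorem 4.1 and Definition 4.2 with no separate proof, treating it as the direct specialization $Y_i = X_i = M_i$ that you describe. Nothing more is needed.
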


\begin{corollary}
 Assume $A$ is a C$^*$-algebra and $I_{1,2}$, $J_{1,2}$ are ideals such that $I_1 \subset \subset J_1$ and  $I_2 \subset \subset J_2$. Then $$I_1 \otimes I_2 \subset \subset J_1 \otimes J_2$$
\end{corollary}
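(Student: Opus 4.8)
The plan is to reduce Corollary 4.3 to Theorem 4.1 by way of Definition 2.3, the only genuine content being the identification of the two different meanings of ``tensor product'' that appear in the statement. Recall that Definition 2.3 declares one ideal compactly contained in another precisely when the associated Hilbert C$^*$-modules are compactly contained. Thus, viewing each ideal $I_k \subseteq J_k$ as a right Hilbert $A$-module (inner product $\langle a,b\rangle = a^*b$, right action of $A$), the hypotheses $I_1 \subset\subset J_1$ and $I_2 \subset\subset J_2$ say exactly that these containments hold at the level of Hilbert C$^*$-modules.

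First I would apply Theorem 4.1 directly, taking $Y_1 = I_1$, $X_1 = J_1$, $Y_2 = I_2$, $X_2 = J_2$. All modules in sight are countably generated (the standing assumption of Section 4, here guaranteed by $\sigma$-unitality of the relevant hereditary subalgebras), so the theorem yields $I_1 \otimes I_2 \subset\subset J_1 \otimes J_2$ for the external tensor product of Hilbert modules. Concretely, if $b_1$ is a compact self-adjoint endomorphism of $J_1$ acting as the identity on $I_1$ and $b_2$ is the analogous endomorphism for $J_2 \supseteq I_2$, then $b_1 \otimes b_2$ witnesses the containment.

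The step that requires care is matching this external tensor product of Hilbert modules with the spatial C$^*$-algebraic tensor product of ideals appearing in the statement. I would verify that the $A\otimes A$-valued inner product on the external tensor product,
$$\langle a_1 \otimes a_2,\, b_1 \otimes b_2\rangle = (a_1^* b_1)\otimes(a_2^* b_2) = (a_1\otimes a_2)^*(b_1\otimes b_2),$$
coincides with the inner product carried by $I_1 \otimes I_2$ when regarded as a right Hilbert $A \otimes A$-module. Since the Hilbert-module norm induced by this inner product agrees with the C$^*$-norm, the completion of the algebraic tensor product $I_1 \odot I_2$ is the spatial tensor product $I_1 \otimes_{\min} I_2$, and likewise for $J_1 \otimes J_2$. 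Hence the external tensor product of the Hilbert modules $I_1$ and $I_2$ is unitarily isomorphic to the ideal $I_1 \otimes I_2$, and similarly on the $J$ side.

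Finally I would record the two ancillary facts that make the statement well-posed: for the minimal tensor product the inclusions $I_k \subseteq J_k$ induce an inclusion $I_1 \otimes I_2 \subseteq J_1 \otimes J_2$, and $I_1 \otimes I_2$ is a closed two-sided ideal of $A \otimes A$. With these identifications in hand, the Hilbert-module compact containment delivered by Theorem 4.1 reads, through Definition 2.3, exactly as $I_1 \otimes I_2 \subset\subset J_1 \otimes J_2$ for ideals, which is the claim. The main obstacle is therefore not the compact containment itself, which is immediate from Theorem 4.1, but the bookkeeping ensuring that the external and spatial tensor products agree as Hilbert C$^*$-modules, so that the two occurrences of $\subset\subset$ in the corollary genuinely refer to the same object.
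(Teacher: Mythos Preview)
Your proposal is correct and follows the same route as the paper, which states the corollary without proof as an immediate consequence of Theorem~4.1 via Definition~2.3. Your additional verification that the external tensor product of the Hilbert $A$-modules $I_k$, $J_k$ coincides with the spatial C$^*$-tensor product of the ideals is a welcome elaboration that the paper leaves implicit.
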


\subsection{Compact containment in the Cuntz semigroup}
This result is based on a private discussion between the first author and George Elliott, whom both authors thank for the valuable insight.\\ 
After completing the proof of the following theorem (Theorem 4.2), we became aware that a similar result had been obtained independently by Antoine, Perera, and Thiel; see Paragraph 6.4.10 in \cite{APT}. For the reader’s convenience, we nevertheless include our proof, as it offers an alternative approach and may provide additional insight into the statement. \\
In what follows we assume $A$ is a separable nuclear C$^*$-algebra and $a_1, a_2, b_1 a_2 \in A$.\\
\indent We demonstrate that compact containment in the Cuntz semigroup is preserved under the tensor product. Specifically: 
\begin{theorem}
If the Cuntz class of $a_1$ is compactly contained in that of $b_1$, $[a_1] << [b_1]$, and the same holds for $a_2$ and $b_2$, $[a_2] << [b_2]$,  then the Cuntz class of their tensor product, $[a_1 \otimes a_2]$, is compactly contained in that of $[b_1 \otimes b_2]$:
$$[a_1 \otimes a_2] << [b_1 \otimes b_2].$$
\end{theorem}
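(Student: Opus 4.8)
The plan is to reduce the statement about Cuntz classes to the already-proved Theorem 4.1 about Hilbert C*-modules, using the standard dictionary between Cuntz comparison and compact containment of the associated Hilbert modules. Concretely, since $A$ has stable rank one, the result of \cite{CEI} allows us to represent the Cuntz class $[a]$ by the Hilbert $A$-module $X_a = \overline{a A}$ (the closure of the right ideal generated by $a$), and it translates the order-theoretic way-below relation $[a] << [b]$ into the concrete compact containment $X_a \subset \subset X_b$ of Hilbert C*-modules, as discussed on page 168 of \cite{CEI} and recalled in Section 2. Thus the hypotheses $[a_1] << [b_1]$ and $[a_2] << [b_2]$ yield $X_{a_1} \subset \subset X_{b_1}$ and $X_{a_2} \subset \subset X_{b_2}$.

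The next step is to identify the Hilbert module associated to the tensor product element. Here I would verify the natural isomorphism
$$X_{a_1 \otimes a_2} \cong X_{a_1} \otimes X_{a_2},$$
that is, that the Hilbert $A \otimes A$-module generated by $a_1 \otimes a_2$ coincides with the external tensor product of the modules generated separately by $a_1$ and $a_2$. This is plausible because $\overline{(a_1 \otimes a_2)(A \otimes A)}$ should agree with the completed algebraic tensor product of $\overline{a_1 A}$ and $\overline{a_2 A}$, using that the spatial tensor product norm is the one induced by the $A \otimes A$-valued inner product defined in the proof of Theorem 4.1. With this identification in hand, Theorem 4.1 applies directly to give
$$X_{a_1} \otimes X_{a_2} \subset \subset X_{b_1} \otimes X_{b_2}.$$

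Finally, I would translate back: applying the \cite{CEI} correspondence in the reverse direction to the C*-algebra $A \otimes A$ (which is again separable, and nuclear with stable rank one in the relevant cases), the concrete relation $X_{a_1 \otimes a_2} \subset \subset X_{b_1 \otimes b_2}$ is equivalent to the order-theoretic way-below relation $[a_1 \otimes a_2] << [b_1 \otimes b_2]$ in the Cuntz semigroup $\mathrm{Cu}(A \otimes A)$, which is exactly the desired conclusion.

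The main obstacle I anticipate is the module-identification step $X_{a_1 \otimes a_2} \cong X_{a_1} \otimes X_{a_2}$: one must check that the external tensor product of the two Hilbert modules, completed in the $A \otimes A$-valued inner product, really is the module generated by $a_1 \otimes a_2$ inside $A \otimes A$, and that this isomorphism is compatible with the compact endomorphism $b_1 \otimes b_2$ from Theorem 4.1. A secondary point requiring care is ensuring that the equivalence between the order-theoretic $<<$ and the concrete $\subset\subset$ from \cite{CEI} applies to $A \otimes A$, i.e.\ that $A \otimes A$ retains the stable-rank-one hypothesis needed for that equivalence; if stable rank one is not preserved under tensoring in general, one would instead invoke the characterization of Gardella and Perera (Theorem 4.2) to bridge the gap, which is presumably why that alternative route is flagged in the introduction.
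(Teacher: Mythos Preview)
Your route differs from the paper's. Instead of passing through Theorem~4.1 and the \cite{CEI} dictionary, the paper works entirely with positive elements via the Gardella--Perera $\epsilon$-cutdown criterion $[a]\ll[b]\Leftrightarrow\exists\,\epsilon>0\colon[a]\le[(b-\epsilon)_+]$: from $[a_i]\le[(b_i-\epsilon_i)_+]$ it first verifies, by an elementary approximant calculation, that Cuntz subequivalence is preserved under tensoring, giving $[a_1\otimes a_2]\le[(b_1-\epsilon_1)_+\otimes(b_2-\epsilon_2)_+]$; then, inside the commutative subalgebra $C(\sigma(b_1))\otimes C(\sigma(b_2))\cong C(\sigma(b_1)\times\sigma(b_2))$, it uses the open-support description of $\ll$ to obtain $[(b_1-\epsilon_1)_+\otimes(b_2-\epsilon_2)_+]\le[(b_1\otimes b_2-\epsilon_1\epsilon_2)_+]$. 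Chaining these gives an $\epsilon>0$ witnessing $[a_1\otimes a_2]\ll[b_1\otimes b_2]$. The payoff of the paper's route is precisely that it sidesteps both obstacles you flag: no module identification $X_{a_1\otimes a_2}\cong X_{a_1}\otimes X_{a_2}$ is needed, and no structural hypothesis is placed on $A\otimes A$, since the cutdown criterion is valid in any C*-algebra. Your approach is conceptually cleaner, but note that the fallback you propose is essentially the paper's own argument, so invoking it would abandon the reduction to Theorem~4.1 rather than repair it; the genuine repair is to observe that stable rank one is only needed on $A$ (to realise $[a_i]\ll[b_i]$ as honest compact inclusions of isomorphic copies $X_{a_i}'\subset\subset X_{b_i}$), while on $A\otimes A$ you use only the implication ``concrete $\subset\subset$ $\Rightarrow$ order-theoretic $\ll$'', which in the \cite{CEI} framework holds without any stable-rank assumption.
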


\begin{proof}

The proof relies on two main ingredients.\\
\indent We use the fact from Proposition 4.3 \cite{GarPer} that $[a] << [b]$ if and only if there exists an $\epsilon >0$ such that $[a]$ is dominated by the $\epsilon$ cutdown of $b$: $$[a] << [b] \Longleftrightarrow [a] \leq [(b-\epsilon)_+].$$
 We refer to the above fact as the Epsilon-Cutdown Characterization.\\
 
 In particular, $[a]$ is compact if and only if there exists $\epsilon >0$ such that $a \sim (a-\epsilon)_+$.
A second important fact we rely on is the following commutative case result (see \cite{GarPer}, Proposition 4.4). For positive functions $a$ and $b$, the relation $[a] << [b]$  is equivalent to the condition that the closure of the open support of $a$ is contained in the open support of $b$:
$$ \overline{\mathrm{supp}_0(a)} \subseteq \mathrm{supp}_0 (b)$$
where $\mathrm{supp}f_0=\{x \in X: f(x) \neq 0\}$, and $f:X \rightarrow \mathbb{C}$, $X$ a compact Hausdorff space. Since $f$ is assumed to be a positive function (as in the case of $a$ and $b$ above), it must be self-adjoint, meaning $f$ takes real values, and positivity further ensures that these values are nonnegative real numbers.\\

We begin with the assumption that the Cuntz class $[a_1]$  is compactly contained in $[b_1]$  and $[a_2]$  is compactly contained in $[b_2]$. Our goal is to show that $[a_1 \otimes a_2]$ is compactly contained in $[b_1 \otimes b_2]$.\\
 By the Epsilon-Cutdown characterization of compact containment (previously mentioned as the first fact, from Proposition 4.3, \cite {GarPer}), our assumption implies the existence of $\epsilon_1 >0$ and $\epsilon_2 >0$ such that the Cuntz classes are dominated by their respective cutdowns:
    $$[a_1] \leq [(b_1 -\epsilon_1)_+] \;\mathrm{and}\; [a_2] \leq [(b_2-   \epsilon_2)_+]$$
    This implies that the Cuntz class of the tensor product is also dominated:
    $$[a_1 \otimes a_2] \leq [(b_1 - \epsilon_1)_+ \otimes (b_2 -\epsilon_2)_+]$$
We use the property that if $[a_1] \leq [b_1]$ and $[a_2] \leq [b_2]$, where $a_1, a_2, b_1, b_2$ are all positive, then $[a_1 \otimes a_2]  \leq [b_1 \otimes b_2]$. We include the straightforward proof below to make the paper easier to read.\\
Given that $a_1=\lim v_n b_1 v_n^*$  ( since, $[a_1]$ is Cuntz less than $[b_1]$) and $a_2=\lim w_n b_2 w_n^*$, (since $[a_2]$ is Cuntz less than $[b_2],$ we can deduce the following using the property $(x\otimes y)(a \otimes b)=xa \otimes yb$:

$a_1 \otimes a_2=\lim (v_n b_1 v_n^*) \otimes (w_n b_2 w_n^*) =\lim (v_n \otimes w_n)(b_1 \otimes b_2)(v_n^* \otimes w_n^*)$. This shows that $[a_1 \otimes a_2] \leq [b_1 \otimes b_2].$

Now, we establish a key inequality. The element $(b_1-\epsilon_1)_+$  belongs to the commutative C$^*$-algebra generated by $b_1$, which is $C(\sigma (b_1))$. Similarly, $(b_2-\epsilon_2)_+$  is in $C(\sigma(b_2))$. The tensor product of these commutative algebras is also commutative: 
$$C(\sigma(b_1))\otimes C(\sigma(b_2)) \cong C(\sigma(b_1) \times \sigma(b_2)).$$ 

We make the following substitutions: $f_1=(b_1- \epsilon_1)_+$, $f_2=(b_2- \epsilon_2)_+$, $g_1=b_1$ and $g_2=b_2$. We then aim to show that $f_1f_2$ is way below $g_1g_2$. This proof uses a commutative version of the Gardella and Perera condition which is: $f <<g$  if there is $c>0$ such that $f(x)>0$ then $g(x)>c$, combined with the result of the following proposition.

\begin{proposition}
Let $K_1$ and $K_2$ be compact Hausdorff spaces, and let 
\[
f_1, g_1 \in C(K_1)_+, \qquad f_2, g_2 \in C(K_2)_+.
\]
Assume that $f_1 \ll g_1$ and $f_2 \ll g_2$ in the sense of Gardella--Perera.  
Then the function
\[
h(x,y) = f_1(x) f_2(y)
\]
is way-below
\[
k(x,y) = g_1(x) g_2(y)
\]
in $C(K_1 \times K_2)_+$.
\end{proposition}

\begin{proof}
Recall that in the commutative case, the Gardella--Perera characterization of the way-below relation for positive continuous functions states that
\[
f \ll g \quad \Longleftrightarrow \quad 
\exists\, c>0 \text{ such that } f(x)>0 \Rightarrow g(x)>c.
\]
Applying this to each pair $(f_i, g_i)$, we obtain constants $c_1, c_2 > 0$ such that
\[
f_1(x)>0 \ \Rightarrow\ g_1(x)>c_1, 
\qquad 
f_2(y)>0 \ \Rightarrow\ g_2(y)>c_2.
\]

Let $c = c_1 c_2 > 0$.  
Suppose $h(x,y) = f_1(x) f_2(y) > 0$.  
Then both $f_1(x)>0$ and $f_2(y)>0$, and hence $g_1(x)>c_1$ and $g_2(y)>c_2$.  
Therefore,
\[
k(x,y) = g_1(x) g_2(y) > c_1 c_2 = c.
\]
This shows that
\[
h(x,y)>0 \ \Rightarrow\ k(x,y)>c.
\]
By the commutative version of Gardella--Perera criterion again, this is precisely the condition $h \ll k$.
\end{proof}

    Let's choose a single $\epsilon =\epsilon_1\epsilon_2$.
    Within the commutative algebra $C(\sigma (b_1)\times \sigma (b_2))$, the open support of the function $(b_1 \otimes b_2-\epsilon)_+$  contains the product of the individual open supports of $(b_1-\epsilon_1)_+$  and $(b_2-\epsilon_2)_+$.

    According to the characterization for commutative elements (the second fact mentioned, from [\cite{GarPer}, Proposition 4.4]), this support containment implies the following domination:
    $$[(b_1-\epsilon_1)_+ \otimes (b_2-\epsilon_2)_+]\leq [(b_1 \otimes b_2-\epsilon)_+] $$

    By combining the results above, we create a chain of domination:
    $$[a_1 \otimes a_2] \leq [(b_1-\epsilon_1)_+\otimes (b_2-\epsilon_2)_+]\leq [(b_1 \otimes b_2-\epsilon)_+]$$
    The existence of an $\epsilon>0$ that satisfies $[a_1 \otimes a_2] \leq [(b_1 \otimes b_2-\epsilon)_+]$  is precisely the condition for compact containment.

    Therefore, we conclude that $[a_1 \otimes a_2]$  is compactly contained in $[b_1 \otimes b_2]$.

\end{proof}
\section{Appendix A} While the content of this section is likely familiar to experts in the field, we believe it offers a convenient reference for all readers, contributing to a more comprehensive understanding of the paper's overall context.\\
\indent This section establishes a fundamental link between closed ideals of the algebra of continuous functions $C(X)$ and the underlying compact Hausdorff space $X$. Specifically, it demonstrates that all closed (two-sided) ideals within $C(X)$ can be uniquely characterized by a corresponding closed subset of $X$ or equivalently by an open subset of $X$. It is important to note that within the context of a compact Hausdorff space, closed subsets inherently possess the compactness property.\\
\indent Let $X$ be a metrizable compact Hausdorff topological space. We denote by $C(X)$ the algebra of all continuous complex-valued functions on $X$. This algebra is separable, meaning it contains a countable dense subset. In the case of $C(X)$ with $X=[0,1]$, the set of all polynomials with rational coefficients forms such a countable dense subset.\\

{\bf Proposition} If $X$ is compact and $I$ is a closed ideal of $C(X)$, then there is a closed subset $F$ of $X$ such that 
$$I=\{ f \in C(X): f(x)=0, \forall x \in F\}$$

{\bf Proof} Let $I$ be a proper closed ideal in $C(X)$. Define $$ F = \{x \in X: f(x)=0, \forall f \in I\}$$
We will show that $F$ is both closed and nonempty.\\
\indent We observe that $$F=\bigcap_{f \in I} ker f$$ which using the fact that an arbitrary intersection of closed sets is closed implies that $F$ is a closed set. Note that since $f$ is continuous, $ker f$ is a closed set. \\
\indent Next, we argue that such $F$ is a nonempty set. We show that if $f,g \in I$ are arbitrary, then $ker f \cap ker g \neq \emptyset$. \\
\indent Assume to obtain a contradiction that $ker f \cap ker g = \emptyset$. Because $|f|^2=(ff^*)$ still belongs to the ideal $I$, we can assume without loss of generality that both $f$ and $g$ are positive. Then $f+g$ is strictly positive in $X$ and $f+g$ belongs to the ideal. This in turn implies that $I$ has an invertible element which shows that $I=C(X)$, i.e., $I$ is not a proper ideal, which is a contradiction. \\
\indent It has been deduced that a closed ideal $I$ in $C(X)$ is precisely the set of continuous functions that are zero on a closed subset $F$ of $X$. An alternative characterization of this ideal is the set of continuous functions whose support is contained within the open complement $U=X-F$:
           $$I=C_0(X \setminus F)=C_0(U).$$
\section{Appendix B}
\begin{center}
\includegraphics[scale=0.5]{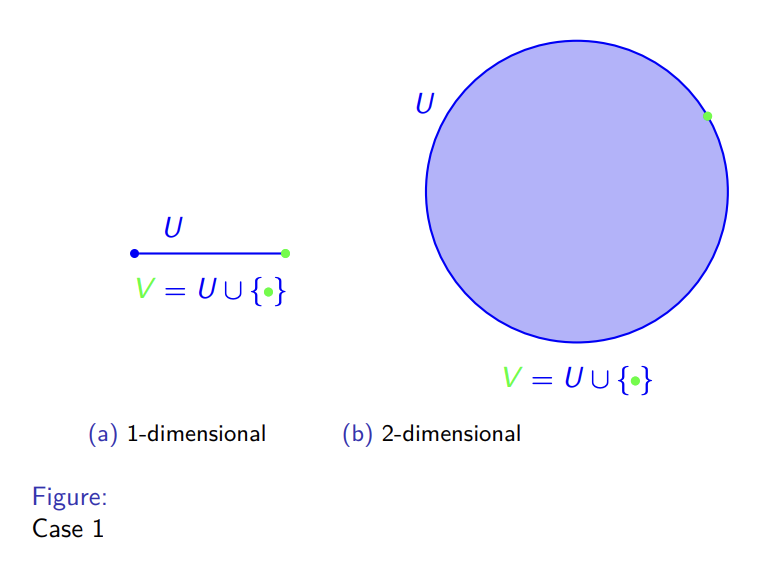} 
\includegraphics[scale=0.5]{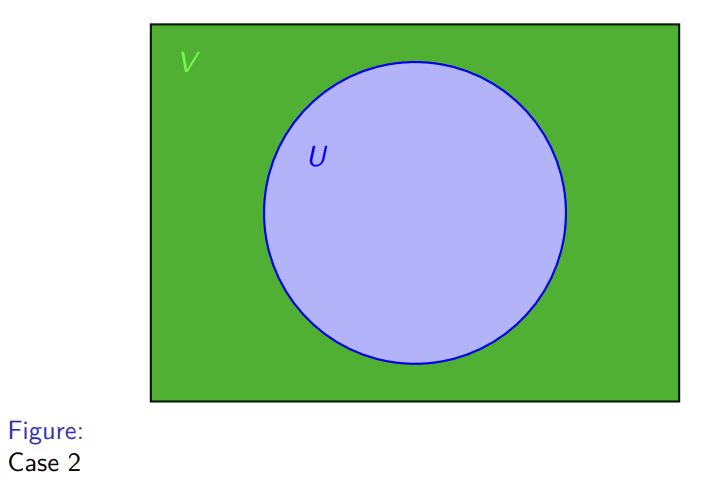}

\end{center}


\begin{thebibliography}{10}

\bibitem{APT}  Antoine R, Perera F and Thiel H 2018 Tensor Products and Regularity Properties of Cuntz semigroups, \textit{Mem. Amer. Math. Soc.} \textbf{251} (1199) 
\bibitem{CEI} Coward K, Elliott G A and Ivanescu C 2008  The Cuntz semigroup as an invariant for C*-algebras. \textit{J. reine angew. Math.}  \textbf{623}, 161--193,
\bibitem{Gab} Gabe J  2024 Classification of $O_{\infty}$-stable C*-algebras. \textit{Mem. Amer. Math. Soc.}  \textbf{293} (1461): v+115.
\bibitem{GarPer} Gardella E and Perera F 2024 The modern theory of Cuntz semigroups of C*-algebras. \textit{EMS Surveys}, to appear. arXiv:2212.02290.

\bibitem{Ara}  Ara P,  Perera F and Toms A S 2011, K-theory for operator algebras. Classification of C*-algebras. Aspects of operator algebras and applications, \textit{ Contemp. Math.}, \textbf{534} Amer. Math. Soc., Providence, RI, 1--71.
\bibitem{Lan}  Lance E C 1995 Hilbert C$^*$-modules. A toolkit for operator algebraists \textit{London Mathematical Society Lecture Note Series}, 210. Cambridge University Press, Cambridge.
\bibitem{Man} Manuilov V M and Troitsky E V,  2005 Hilbert C$^*$-modules \textit{Trans. Math.  Monogr.}, Amer. Math. Soc., {\bf 226}.

\bibitem{Scott} Gierz G, Hofmann K H,  Keimel K, Lawson J , Mislove M and Scott D  2003 Continuous lattices and domains \textit{Encyclopedia of Mathematics and its Applications}, vol. 93, Cambridge University Press, MR1975381.
\end{thebibliography}
\end{document}